\def\C{\mathbb C}
\def\N{\mathbb N}
\def\1{{\bf 1}}
\def\pmod #1{\ ({\rm{mod}}\ #1)}
\newtheorem{theorem}{Theorem}[section]
\newtheorem{lemma}[theorem]{Lemma}
\theoremstyle{remark}
\numberwithin{equation}{section}
\begin{document}

\title[Two $q$-supercongruences from  Watson's transformation ]
{Two $q$-supercongruences from  Watson's transformation}

\begin{abstract}
Guo and Zudilin [Adv. Math. 346 (2019), 329--358] introduced a new
method called `creative microscoping', to prove many
$q$-supercongruences  in a unified way. In this paper, we apply this
method and Watson's ${}_8\phi_7$ transformation formula to prove two
 $q$-supercongruences, which were recently conjectured by Guo and Schlosser.
\end{abstract}
\author[He-Xia Ni]{He-Xia Ni}
\address{Department of Applied Mathematics, Nanjing Audit University\\Nanjing 211815,
People's Republic of China}
\email{nihexia@yeah.net}

\author[Li-Yuan Wang]{Li-Yuan Wang*}
\address{School of Physical and Mathematical Sciences, Nanjing Tech University , Nanjing 211816, People's Republic of China}
\email{wly@smail.nju.edu.cn}

\keywords{congruence; cyclotomic polynomial; $q$-binomial coefficient; Waston's transformation;$q$-Pfaff-Saalasch\"{u}tz summation.}
\thanks{*Corresponding author.}
\thanks{The first author is supported by the National Natural Science Foundation of China (Grant No. 12001279).  }
\subjclass[2010]{Primary 11B65; Secondary 05A10, 05A30, 11A07}
\maketitle

\section{Introduction}
In 1997, Van Hamme \cite{Ha96} proposed 13 conjectured congruences concerning truncated Ramanujan-type series for $1/\pi$. For example, he made the following conjecture:
\begin{align}
\sum_{k=0}^{\frac{p-1}{2}}(4k+1)\frac{(\frac{1}{2})_k^4}{k!^4}&\equiv p\pmod{p^3},\label{DT1}
\end{align}
where $p>3$ is a prime and $(a)_n=a(a+1)\cdots (a+n-1)$ is the Pochhammer symbol. The supercongruence \eqref{DT1} was proved by
Van Hamme \cite[(C.2)]{Ha96} himself and was later shown to hold modulo $p^4$ by Long \cite{Lo}.
Moreover, applying the fact that the Calabi-Yau threefold is related to modular forms, Ahlgren and Ono \cite{AO}, Kilbourn \cite{Ki} confirmed Van Hamme's (M.2) supercongruence:
\begin{align}\label{DT3}
\sum_{k=0}^{(p-1)/2}\frac{(\frac{1}{2})_k^4}{k!^4}\equiv a_p\pmod{p^3},
\end{align}
where $a_p$ is the $p$-th coefficient of a weight $4$ modular form
\begin{align*}
\eta (2z)^4\eta(4z)^4:=q\prod_{n=1}^{\infty}(1-q^{2n})^4(1-q^{4n})^4,q=e^{2\pi iz}.
\end{align*}
In 2011, Long \cite{Lo}  made use of Whipple's ${}_7F_{6}$ transformation formula to prove that
\begin{align*}
\sum_{k=0}^{(p-1)/2}(4k+1)\frac{(\frac{1}{2})^6}{k!^6}\equiv p\sum_{k=0}^{(p-1)/2}\frac{(\frac{1}{2})_k^4}{k!^4}\pmod{p^4} \  {\rm for}\  p>3,
\end{align*}
which can be written as
\begin{align}\label{AOK}
\sum_{k=0}^{(p-1)/2}(4k+1)\frac{(\frac{1}{2})^6}{k!^6}\equiv pa_p\pmod{p^4}\ {\rm for}\  p>3
\end{align}
in light of the supercongruence \eqref{DT3}.
In 2016, Long and Ramakrishna \cite[Theorem 2]{LR} established the following supercongruence:
\begin{align}\label{DT4}
\sum_{k=0}^{p-1}(6k+1)\frac{(\frac{1}{3})_k^6}{k!^6}\equiv\begin{cases}-p\Gamma_p(1/3)^9 \pmod{p^6}, &\text{if }p\equiv 1\pmod{6},\\[5pt]
-\frac{p^4}{27}\Gamma_p(1/3)^9 \pmod{p^6},&\text{if }p\equiv 5\pmod{6},
\end{cases}
\end{align}
where $\Gamma_p(x)$ is the $p$-adic Gamma function.
Now all of Van Hamme's conjectural supercongruences have been confirmed. We refer the reader to \cite{OZ16,Sw15} for the history of the proofs of them.

During the past few years, $q$-supercongruences have been widely investigated and
a variety of techniques were involved. For more related results and the latest progress, see \cite{G2,G3,Guonew,GS,GS20,GS21,GZ1,LW,LP,NP,NP2,WY1,Zudilin2}.
In particular, Guo and Schlosser \cite[ Theorem 4.1]{GS} proposed the following partial  $q$-analogue of \eqref{AOK} and \eqref{DT4}:
for positive integers $n$ and $d$ with $d\geq 3$ and $\gcd(n,d)=1$,
\begin{align}
&\sum_{k=0}^{n-1}[2dk+1]\frac{(aq,q/a;q^d)_k(q,q^d)_k^4}{(aq^d,q^d/a;q^d)_k(q^d;q^d)_k^4}q^{(2d-3)k} \notag\\[5pt]
&\quad\equiv\begin{cases}0 \pmod{\Phi_n(q)^2}, &\text{if }n\equiv -1\pmod{d}, \\[5pt]
0 \pmod{\Phi_n(q)},&\text{otherwise}.
\end{cases}  \label{DT2}
\end{align}
Here and in what follows, we assume $q$ to be fixed with $0<|q|<1$. For $a\in \C,$ the {\it $q$-shifted factorial} \cite{GR} is defined by
$$
(x;q)_n=\begin{cases}
(1-x)(1-xq)\cdots(1-xq^{n-1}), &\text{if }n\geq 1,\\[5pt]
1, &\text{if }n=0,
\end{cases}
$$
and the {\it $n$-th cyclotomic polynomial} is defined as
$$
\Phi_n(q):=\prod_{\substack{1\leq k\leq n\\ (n,k)=1}}(q-e^{2\pi\sqrt{-1}\cdot\frac{k}{n}}).
$$
Also, we frequently use the shortened notation:
$$(a_1,\ldots, a_m;q)_k=(a_1;q)_k\cdots (a_m;q)_k,\ k\in \N\cup \infty.$$

The main purpose of this paper is to prove the following results, which were recently conjectured by
Guo and Schlosser \cite[Conjectures 4.2 and 5.11]{GS}.
\begin{theorem} \label{Conject1}
Let $n$ and $d$ be positive integers with $d\geq 3$ and $\gcd(n,d)=1$. Then
\begin{align}\label{more5}
&\sum_{k=0}^{n-1}[2dk+1]\frac{(aq,q/a,bq,q/b;q^d)_k(q;q^d)_k^2}{(aq^d,q^d/a,bq^d,q^d/b;q^d)_k(q^d;q^d)_k^2}q^{(2d-3)k}\notag\\
&\quad\equiv\begin{cases}0 \pmod{[n]\Phi_n(q)}, &\text{if }n\equiv -1\pmod{d},\\[5pt]
0 \pmod{[n]},&\text{otherwise}.
\end{cases}
\end{align}
\end{theorem}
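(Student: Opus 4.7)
Plan:

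The plan is to apply Guo and Zudilin's creative microscoping method in tandem with Watson's ${}_8\phi_7$ transformation formula and the $q$-Pfaff-Saalsch\"utz summation. I would introduce a third parameter $c$ and define
\begin{equation*}
S_n(a,b,c;q):=\sum_{k=0}^{n-1}[2dk+1]\frac{(aq,q/a,bq,q/b,cq,q/c;q^d)_k}{(aq^d,q^d/a,bq^d,q^d/b,cq^d,q^d/c;q^d)_k}q^{(2d-3)k},
\end{equation*}
which reduces to the left-hand side of \eqref{more5} at $c=1$. The strategy is to establish $[n]\mid S_n(a,b;q)$ unconditionally, and then to produce the extra factor $\Phi_n(q)$ in the case $n\equiv -1\pmod d$ via the microscoping variable $c$; specialising $c=1$ at the end delivers \eqref{more5} in both cases.

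For the unconditional $[n]$-divisibility I would fix any divisor $m>1$ of $n$ and a primitive $m$-th root of unity $\zeta$, and show that $S_n(a,b;\zeta)=0$ as a Laurent polynomial in $a,b$. Since $\gcd(m,d)=1$, there is a unique $\ell_0\in\{0,\dots,m-1\}$ with $d\ell_0\equiv -1\pmod m$, which governs the vanishing pattern of the Pochhammer symbols at $q=\zeta$. A term-by-term involution on each block of $m$ consecutive indices in $[0,n-1]$ --- essentially $k\leftrightarrow\ell_0-k$ within each block, combined with the reciprocal symmetry of the pairs $(xq,q/x;q^d)_k$ for $x\in\{a,b\}$ and the sign reversal of $[2dk+1]$ --- pairs every nonzero contribution with its negative. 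This yields $\Phi_m(q)\mid S_n(a,b;q)$ for every $m\mid n$ with $m>1$, hence $[n]\mid S_n(a,b;q)$, which settles the ``otherwise'' case of \eqref{more5}.

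For $n\equiv -1\pmod d$, I would obtain the additional $\Phi_n(q)$ factor by proving
\begin{equation*}
(1-cq^n)(c-q^n)\mid S_n(a,b,c;q)\quad\text{in}\quad\Z[a^{\pm 1},b^{\pm 1},c^{\pm 1},q].
\end{equation*}
The summand is invariant under $c\leftrightarrow 1/c$, so it suffices to verify the single vanishing $S_n(a,b,q^{-n};q)=0$ as a Laurent polynomial in $a,b$. After a suitable rewriting, the specialised sum takes the shape of a terminating very-well-poised ${}_8\phi_7$, and Watson's transformation converts it into a balanced terminating ${}_4\phi_3$, which the $q$-Pfaff-Saalsch\"utz summation evaluates in closed form as a quotient of $q$-shifted factorials. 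A direct inspection of that quotient shows that the hypothesis $n\equiv -1\pmod d$ forces an explicit factor there to vanish, giving the claimed divisibility. Setting $c=1$ then yields $(1-q^n)^2\mid S_n(a,b;q)$ in $\Z[a^{\pm 1},b^{\pm 1},q]$, and the polynomial identity
\begin{equation*}
(1-q^n)^2=[n]\Phi_n(q)\cdot (1-q)^2\prod_{\substack{d'\mid n\\ 1<d'<n}}\Phi_{d'}(q)
\end{equation*}
upgrades this to $[n]\Phi_n(q)\mid S_n(a,b;q)$, completing the first case of \eqref{more5}.

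The principal difficulty is the Watson $+$ $q$-Pfaff-Saalsch\"utz step: one must cast the specialised sum $S_n(a,b,q^{-n};q)$ into exactly the shape required by Watson's template (identifying the VWP base, the five free upper parameters, and the terminating parameter, all in base $q^d$), and then pinpoint within the resulting closed form the $q$-shifted factorial whose vanishing is forced precisely by $n\equiv -1\pmod d$. The antipodal involution in the unconditional step, while conceptually clear, also requires careful bookkeeping of how each reciprocal Pochhammer pair transforms across the boundary $k=\ell_0$ within each block of length $m$.
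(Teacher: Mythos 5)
Your overall architecture (a mod-$[n]$ step by pairing terms at roots of unity, plus a creative-microscoping parameter $c$ and Watson's transformation to squeeze out the extra $\Phi_n(q)$ when $n\equiv -1\pmod d$) is the same as the paper's, and your first step is essentially the paper's Lemma \ref{Th1proofLemma2}. But the second step, as you have set it up, fails: with the \emph{symmetric} insertion $(cq,q/c;q^d)_k/(cq^d,q^d/c;q^d)_k$, the specialisation $c=q^{-n}$ turns the new numerator factors into $(q^{1-n};q^d)_k$ and $(q^{1+n};q^d)_k$, and neither of these vanishes for any $k$ unless $n\equiv 1\pmod d$ --- which is excluded here, since $d\geq 3$ and $n\equiv -1\pmod d$. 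So the specialised sum is not a terminating very-well-poised series, Watson's transformation (which needs the ${}_4\phi_3$ side to terminate) does not apply, and the claimed vanishing $S_n(a,b,q^{-n};q)=0$ is simply false. Concretely, for $d=3$, $n=2$, $c=q^{-2}$ one gets $S_2(a,b,q^{-2};q)=1-[7]q^2\frac{(1-q^3)}{(1-q^5)}\cdot\frac{(1-aq)(1-q/a)(1-bq)(1-q/b)}{(1-aq^3)(1-q^3/a)(1-bq^3)(1-q^3/b)}$, which visibly depends on $a,b$ (it equals $1$ at $a=q^{-1}$), so no identity $(1-cq^n)(c-q^n)\mid S_n$ can hold. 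The divisor $(1-cq^n)(c-q^n)$ is the right gadget for the companion case $n\equiv 1\pmod d$ (Theorem \ref{Conject2}), not for this one.

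The paper's fix is to insert $c$ \emph{asymmetrically}: the summand carries $(q/c;q^d)_k$ in the numerator, $(cq^d;q^d)_k$ in the denominator, and the power $(cq^{2d-3})^k$, and one specialises $c=q^{(d-1)n}$. Then $q/c=q^{1-(d-1)n}=q^{-Md}$ with $M=((d-1)n-1)/d\in\Z$ precisely because $n\equiv -1\pmod d$, so the series terminates at $k=M$ and Watson's ${}_8\phi_7$ transformation applies. Moreover the resulting $\Phi_n(q)^2$ does not come from a double root in $c$ but from the explicit prefactor $[dn-n]\,(q^{d-1};q^d)_M/(q^d/b,bq^d;q^d)_M$ on the ${}_4\phi_3$ side: both $[dn-n]$ and the factor $1-q^n$ sitting inside $(q^{d-1};q^d)_M$ are divisible by $\Phi_n(q)$, and one only has to check (as the paper does with its $f_k$ bookkeeping) that the ${}_4\phi_3$ denominators stay coprime to $\Phi_n(q)$ as $c\to 1$. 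In particular no $q$-Pfaff--Saalsch\"utz evaluation of the ${}_4\phi_3$ is needed, and the congruence in $c$ is taken modulo the single linear factor $c-q^{(d-1)n}$ rather than a product of two. You would need to rebuild your second step along these lines; as written, its central claim is not correct.
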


\begin{theorem}\label{Conject2}
Let $n>1$ and $d\geqslant 3$ be integers with $\gcd(n,d)=1$. Then
\begin{align}\label{more6}
&\sum_{k=0}^{n-1}[2dk-1]\frac{(aq^{-1},q^{-1}/a,bq^{-1},q^{-1}/b;q^d)_k(q^{-1};q^d)_k^2}{(aq^d,q^d/a,bq^d,q^d/b;q^d)_k(q^d;q^d)_k^2}q^{(2d+3)k}\notag\\
&\quad\equiv\begin{cases}0 \pmod{[n]\Phi_n(q)}, &\text{if }n\equiv 1\pmod{d},\\[5pt]
0 \pmod{[n]},&\text{otherwise}.
\end{cases}
\end{align}
\end{theorem}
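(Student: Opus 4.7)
The plan is to follow the creative-microscoping strategy of Guo--Zudilin in parallel with the proof of Theorem \ref{Conject1}: regard
\begin{align*}
S_n(a,b):=\sum_{k=0}^{n-1}[2dk-1]\frac{(aq^{-1},q^{-1}/a,bq^{-1},q^{-1}/b;q^d)_k(q^{-1};q^d)_k^2}{(aq^d,q^d/a,bq^d,q^d/b;q^d)_k(q^d;q^d)_k^2}q^{(2d+3)k}
\end{align*}
as a Laurent polynomial in $a$ (and in $b$) over $\Z[q^{\pm1}]$, and prove parametric congruences that specialize at $a=b=1$ to the desired statement. Since $\Phi_n(q)$ is coprime to $[n]/\Phi_n(q)=\prod_{m\mid n,\,1<m<n}\Phi_m(q)$, it suffices to prove separately (i) $S_n(1,1)\equiv 0\pmod{[n]}$ in all cases, and (ii) an additional $\Phi_n(q)$-divisibility when $n\equiv 1\pmod{d}$.

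For step (ii), the key observation is that the specialization $a=q^{-n}$ turns the sum into a terminating very-well-poised ${}_8\phi_7$ series in base $q^d$ with $n$ terms. I would then apply Watson's ${}_8\phi_7$ transformation (with $q$ replaced by $q^d$ and the five free parameters chosen to match $q^{-1-dn},\,bq^{-1},\,q^{-1}/b,\,q^{-1},\,q^{-1}$) to reduce it to a terminating balanced ${}_4\phi_3$. Under the hypothesis $n\equiv 1\pmod{d}$ the integer $(n-1)/d$ emerges naturally as a shift in this reduced series, whose closed-form evaluation by the $q$-Pfaff--Saalsch\"utz summation produces a Pochhammer factor of the form $(q^{-1};q^d)_{1+(n-1)/d}$ that vanishes identically; hence $S_n(q^{-n},b)=0$. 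The matching vanishing $S_n(q^n,b)=0$ follows from the symmetry $a\leftrightarrow q^{-1}/a$ of the summand combined with the reindexing $k\mapsto n-1-k$. Thus $S_n(a,b)$ is divisible by $(1-aq^n)(a-q^n)$ as a Laurent polynomial in $a$, which at $a=b=1$ becomes $-q^{-n}(1-q^n)^2$ and supplies the required factor $\Phi_n(q)^2$.

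For step (i), I would use the standard cyclic pair-cancellation: modulo $\Phi_m(q)$ for each divisor $m$ of $n$ with $1<m\le n$, the substitution $k\mapsto m-1-k$ sends the summand to its negative after one uses $q^m\equiv 1$ to rewrite the various $q^{-1}$-shifted Pochhammer symbols and tracks the sign contributed by $[2dk-1]\leftrightarrow[2d(m-1-k)-1]$. Hence the first $m$ terms cancel in pairs, while for $k\ge m$ the summand vanishes modulo $\Phi_m(q)$ because a numerator Pochhammer picks up an extra zero that the denominator cannot compensate. Combining over all such $m$ and using the pairwise coprimality of distinct cyclotomic polynomials yields the $[n]$-divisibility of $S_n(1,1)$.

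The main obstacle I anticipate is the precise parameter matching in Watson's formula under the $q^{-1}$-shifted setting of Theorem \ref{Conject2}: the Saalsch\"utz balancing condition for the reduced ${}_4\phi_3$ now involves $q^{-2}$ rather than $q^{2}$ as in Theorem \ref{Conject1}, and one must verify that this still forces vanishing precisely when $n\equiv 1\pmod{d}$ rather than producing a non-trivial closed form. A secondary but more routine concern is adapting the pair-cancellation argument from the $q$-setting to the $q^{-1}$-setting, where the involution $k\mapsto m-1-k$ introduces explicit compensating powers of $q$ that must be carried through the cyclotomic reduction.
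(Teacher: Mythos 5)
The central difficulty is in your step (ii), and it is exactly the point where the paper departs from the naive creative-microscoping recipe. With only the two parameters $a,b$ present in \eqref{more6}, the specialization $a=q^{-n}$ does \emph{not} terminate the sum when $n\equiv 1\pmod d$ and $d\geq 3$: the relevant factor is $(aq^{-1};q^d)_k=(q^{-n-1};q^d)_k$, which vanishes only if $d\mid n+1$, and this fails under $n\equiv 1\pmod d$ with $d\geq 3$; the companion factor $(q^{-1}/a;q^d)_k=(q^{n-1};q^d)_k$ never vanishes, and your proposed parameter $q^{-1-dn}$ fares no better since $d\nmid 1$. Consequently Watson's transformation --- which requires the ${}_4\phi_3$ side to terminate --- cannot be applied at $a=q^{\pm n}$, and the claimed vanishing $S_n(q^{\pm n},b)=0$, on which the whole divisibility by $(1-aq^n)(a-q^n)$ rests, is unsupported. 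The paper's remedy is to introduce a \emph{third} parameter $c$: one pair $(q^{-1};q^d)_k/(q^d;q^d)_k$ is replaced by $(q^{-1}/c;q^d)_k/(cq^d;q^d)_k$ and $q^{(2d+3)k}$ by $(cq^{2d+3})^k$ (Theorem \ref{Th1} with $r=-1$). The specialization $c=q^{(d-1)n}$ terminates the series at $M=((d-1)n+1)/d$, an integer precisely when $n\equiv 1\pmod d$; Watson then yields a congruence modulo $(c-q^{(d-1)n})\Phi_n(q)$ whose right-hand side carries the prefactor $[(d-1)n]\,(q^{d+1};q^d)_M$, divisible by $\Phi_n(q)^2$, and the conclusion at $c=1$ is extracted by checking that the ${}_4\phi_3$ denominators remain coprime to $\Phi_n(q)$. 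No $q$-Pfaff--Saalsch\"utz evaluation is used or needed.

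Step (i) also has a gap. The reflection producing pairwise cancellation modulo $\Phi_s(q)$ (for $s\mid n$, $s>1$) is $k\mapsto m_1-k$, where $m_1\in[0,s-1]$ solves $dm_1\equiv 1\pmod s$ (Lemma \ref{Th1proofLemma1} with $r=-1$); your involution $k\mapsto m-1-k$ around a divisor $m$ would require $d\equiv -1\pmod m$, which is not assumed. More seriously, for a proper divisor $s<n$ the terms with $k\geq s$ do \emph{not} vanish modulo $\Phi_s(q)$: both $(q^{-1};q^d)_k$ and $(q^d;q^d)_k$ acquire $\Phi_s$-factors at comparable rates, so there is no uncompensated zero. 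The paper handles this by evaluating the summand at a primitive $s$-th root of unity $\zeta$ and splitting the range into blocks of length $s$, each proportional to $\sum_{k=0}^{s-1}c_\zeta(k)=0$ (Lemma \ref{Th1proofLemma2}). Without these two repairs the proposal does not close.
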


It is clear that \eqref{more5} is a two-parameters generation of \eqref{DT2}.

Following Gasper and Rahman \cite{GR}, the ${}_{r+1}\phi_r$ basic hypergeometric series is defined by
\begin{equation*}
{}_{r+1}\phi_r\!\left[\begin{matrix}
a_1,a_2,\dots,a_{r+1}\\b_1,\dots,b_r
\end{matrix};q,z\right]:=\sum_{k=0}^\infty
\frac{(a_1,a_2,\dots,a_{r+1};q)_k}{(q,b_1,\dots,b_r;q)_k}
z^k.
\end{equation*}
In the proof of Theorems \ref{Conject1} and \ref{Conject2}, we will make use of Waston's ${}_8\phi_7$ transformation formula \cite[Appendix (III.17)]{GR}:
\begin{align}\label{Wasttran}
&{}_8\phi_7\bigg[\begin{matrix}a, &qa^{\frac{1}{2}},&-qa^{\frac{1}{2}},&b,&c,&d,&e,&f\\ &a^{\frac{1}{2}},&-a^{\frac{1}{2}},&aq/b,&aq/c,&aq/d,&aq/e,&aq/f\end{matrix};\ q,\ \frac{a^2q^2}{bcdef}\bigg]\notag\\
&\quad=\frac{(aq,aq/de,aq/df,aq/ef;q)_\infty}{(aq/d,aq/e,aq/f,aq/def;q)_\infty}{}_4\phi_3\bigg[\begin{matrix}aq/{bc},&d,&e,&f\\ &aq/b,&aq/c,&def/a\end{matrix};\ q,\ q\bigg],
\end{align}
which is valid whenever the ${}_8\phi_7$ series converges and the ${}_4\phi_3$ series terminates.

\section{Proof of Theorems \ref{Conject1} and \ref{Conject2}}

We need the following lemma, which was proved by Guo and Schlosser \cite[Lemma 2.1]{GS20}.
\begin{lemma}\label{Th1proofLemma1}
Let $m,n$ and $d$ be positive integers with $m\leq n-1$. Let $r$ be an integer satisfying $dm\equiv -r\pmod{n}$. Then, for $0\leq k\leq m,$ we have
\begin{align*}
\frac{(aq^r;q^d)_{m-k}}{(q^d/a;q^d)_{m-k}}\equiv (-a)^{m-2k}\frac{(aq^r;q^d)_k}{(q^d/a;q^d)_k}q^{m(dm-d+2r)/2+(d-r)k}\pmod{\Phi_{n}(q)}.
\end{align*}
\end{lemma}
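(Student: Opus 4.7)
The plan rests on the single congruence $q^{dm} \equiv q^{-r} \pmod{\Phi_n(q)}$, which is immediate from $dm \equiv -r \pmod{n}$ together with $\Phi_n(q) \mid q^n-1$. Under this congruence, any factor $1 - aq^{r+jd}$ with $j$ close to $m$ can, by elementary manipulation, be re-expressed in terms of a factor of $(q^d/a;q^d)_m$, and conversely. I would exploit this exchange systematically across the relevant product ranges.

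The first step is to settle the $k = 0$ instance
$$\frac{(aq^r;q^d)_m}{(q^d/a;q^d)_m} \equiv (-a)^m q^{m(dm-d+2r)/2} \pmod{\Phi_n(q)}.$$
To do so, I would reverse the index in $(q^d/a;q^d)_m = \prod_{j=0}^{m-1}(1 - q^{(j+1)d}/a)$ via $j \mapsto m-1-i$, substitute $q^{md} \equiv q^{-r}$ in each factor, and apply the identity $1 - q^{-r-id}/a = -a^{-1}q^{-r-id}(1 - aq^{r+id})$. The overall scalar $(-a^{-1})^m q^{-rm-dm(m-1)/2}$ that emerges, upon inversion, gives the claim since $rm + dm(m-1)/2 = m(dm-d+2r)/2$.

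For general $k$, I would then use the telescoping decompositions
$$(aq^r;q^d)_{m-k} = \frac{(aq^r;q^d)_m}{\prod_{j=m-k}^{m-1}(1-aq^{r+jd})}, \qquad (q^d/a;q^d)_{m-k} = \frac{(q^d/a;q^d)_m}{\prod_{j=m-k}^{m-1}(1-q^{(j+1)d}/a)},$$
and apply the same reversal $j \mapsto m-1-i$ to each tail. The congruence $q^{md} \equiv q^{-r}$, combined with
$$1 - aq^{-d(i+1)} = -aq^{-d(i+1)}(1 - q^{d(i+1)}/a), \qquad 1 - q^{-r-id}/a = -a^{-1}q^{-r-id}(1 - aq^{r+id}),$$
turns the two tails, modulo $\Phi_n(q)$, into explicit scalar multiples of $(q^d/a;q^d)_k$ and $(aq^r;q^d)_k$ respectively. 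Assembling the base case with these two tail reductions and simplifying using $(-1)^m a^{m-2k} = (-a)^{m-2k}$ together with $-rk - dk(k-1)/2 + dk(k+1)/2 = (d-r)k$ produces the stated congruence.

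The argument is entirely algebraic; the only real obstacle is carefully tracking the four $q$-exponent contributions and the two sign flips across the base case and the two tails without arithmetic slips. A short induction on $k$ starting from the $k=0$ base is a possible alternative, but it hides rather than removes the same bookkeeping, so I would prefer the direct global computation above.
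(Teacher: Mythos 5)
Your proposal is correct: all three reductions (the $k=0$ base case and the two tail products) check out, and the exponent and sign bookkeeping assembles to exactly the stated congruence. The paper itself gives no proof, citing the lemma as [GS20, Lemma~2.1], and your argument---index reversal combined with $q^{dm}\equiv q^{-r}\pmod{\Phi_n(q)}$ and the factor identity $1-x = -x(1-x^{-1})$---is essentially the standard proof given there.
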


In order to prove Theorems \ref{Conject1} and \ref{Conject2}, we also need to establish the following three-parametric $q$-congruences.
\begin{lemma}\label{Th1proofLemma2}
Let $n$ and $d$ be positive integers with $\gcd(n,d)=1$. Let $r$ be an integer and let $a,b,c $ be indeterminates. Then
\begin{align}
&\sum_{k=0}^{m}[2dk+r]\frac{(aq^r,q^r/a,bq^r,q^r/b,q^r/c,q^r;q^d)_k}{(aq^d,q^d/a,bq^d,q^d/b,cq^d,q^d;q^d)_k}(cq^{2d-3r})^k \equiv 0\pmod{[n]},\label{super3}\\[5pt]
&\sum_{k=0}^{n-1}[2dk+r]\frac{(aq^r,q^r/a,bq^r,q^r/b,q^r/c,q^r;q^d)_k}{(aq^d,q^d/a,bq^d,q^d/b,cq^d,q^d;q^d)_k}(cq^{2d-3r})^k \equiv 0\pmod{[n]},\label{super4}
\end{align}
where $0\leq m\leq n-1$ and $dm\equiv -r\pmod{n}.$
\end{lemma}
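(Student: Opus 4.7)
The plan is to establish \eqref{super3} by a term-by-term pairing $k\leftrightarrow m-k$ built from six simultaneous applications of Lemma \ref{Th1proofLemma1}, and then to deduce \eqref{super4} via a short tail argument. Denote the $k$-th summand in \eqref{super3} by $F(k)$. Its Pochhammer part decomposes into six ratios
\[
\frac{(aq^r;q^d)_k}{(q^d/a;q^d)_k},\ \frac{(q^r/a;q^d)_k}{(aq^d;q^d)_k},\ \frac{(bq^r;q^d)_k}{(q^d/b;q^d)_k},\ \frac{(q^r/b;q^d)_k}{(bq^d;q^d)_k},\ \frac{(q^r/c;q^d)_k}{(cq^d;q^d)_k},\ \frac{(q^r;q^d)_k}{(q^d;q^d)_k},
\]
each matching the template of Lemma \ref{Th1proofLemma1} with the parameter ``$a$'' taken as $a,\, a^{-1},\, b,\, b^{-1},\, c^{-1},\, 1$ respectively. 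Applying the lemma in parallel, the $a^{\pm 1}$ and $b^{\pm 1}$ prefactors cancel pairwise, the six signs $(-1)^{m-2k}$ multiply to $1$, and the last two applications leave a factor $c^{2k-m}$; combined with the explicit $(cq^{2d-3r})^{m-2k}$ from the ratio of terminal factors, all powers of $c$ cancel. The $q$-integer ratio gives $[2d(m-k)+r]/[2dk+r]\equiv -q^{-r-2dk}\pmod{\Phi_n(q)}$ via $dm\equiv -r\pmod n$. Collecting $q$-exponents yields
\[
F(m-k)\equiv -q^{E(m,k)}F(k)\pmod{\Phi_n(q)},\qquad E(m,k)=3dm^2+(3r-d)m-r,
\]
and a second application of $dm\equiv -r$ shows $E(m,k)\equiv -3rm+3rm-dm-r\equiv 0\pmod n$. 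Pairing $k\leftrightarrow m-k$ in $\sum_{k=0}^{m}F(k)$ therefore proves \eqref{super3} modulo $\Phi_n(q)$.

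To upgrade from $\Phi_n(q)$ to $[n]=\prod_{d_0\mid n,\,d_0>1}\Phi_{d_0}(q)$, note that the only ingredient of the above computation is the congruence $q^{r+dm}\equiv 1$. Since $n\mid r+dm$ and each $d_0\mid n$, this identity also holds modulo every $\Phi_{d_0}(q)$, so the entire sixfold pairing reproduces verbatim modulo each such $\Phi_{d_0}(q)$. Pairwise coprimality of the cyclotomic factors then delivers \eqref{super3}.

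For \eqref{super4} it suffices to show $\sum_{k=m+1}^{n-1}F(k)\equiv 0\pmod{[n]}$. Fix $d_0\mid n$ with $d_0>1$, and let $m_0\in\{0,\dots,d_0-1\}$ satisfy $dm_0\equiv -r\pmod{d_0}$, so that $m\equiv m_0\pmod{d_0}$. The indices $k\in[m+1,n-1]$ split into the partial residue-tail of the $d_0$-block containing $m$ (those $k$ with $k\bmod d_0>m_0$), and a union of complete $d_0$-blocks beyond. In the partial tail a direct zero-count shows that $(q^r;q^d)_k$ carries exactly one more factor divisible by $\Phi_{d_0}(q)$ than $(q^d;q^d)_k$, so $F(k)\equiv 0\pmod{\Phi_{d_0}(q)}$ individually. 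Within each complete block $\{Jd_0,\dots,Jd_0+d_0-1\}$, indices with $k\bmod d_0>m_0$ vanish individually by the same count, while indices with $k\bmod d_0\le m_0$ pair via the same sixfold scheme applied with auxiliary parameter $M=2Jd_0+m_0$ (which satisfies $dM\equiv -r\pmod{d_0}$), making the block sum $\equiv 0\pmod{\Phi_{d_0}(q)}$. Summing and invoking coprimality of cyclotomic factors yields \eqref{super4}. The principal technical obstacle is the upgrade from $\Phi_n(q)$ to $[n]$: one must confirm that the \emph{proof}---not merely the stated form---of Lemma \ref{Th1proofLemma1} depends only on $q^{r+dm}\equiv 1$, so that its conclusion transports to each $\Phi_{d_0}(q)$ for $d_0\mid n$.
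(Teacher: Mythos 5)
Your proof of \eqref{super3} modulo $\Phi_n(q)$ coincides with the paper's: pair $k\leftrightarrow m-k$, apply Lemma \ref{Th1proofLemma1} six times with parameters $a,a^{-1},b,b^{-1},c^{-1},1$, and check that the signs, the powers of $a,b,c$ and the $q$-exponent all cancel (your exponent $E(m,k)=3dm^2+(3r-d)m-r\equiv 0\pmod n$ checks out). The divergence, and the gap, is in the upgrade to the modulus $[n]$ for both \eqref{super3} and \eqref{super4}. You propose to rerun the same pairing modulo each $\Phi_{d_0}(q)$ with $d_0\mid n$, invoking Lemma \ref{Th1proofLemma1} with the auxiliary parameter $M=2Jd_0+m_0$ inside each complete block. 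You correctly flag that this needs the lemma for $m>d_0-1$, but the obstruction is more serious than ``check that the proof only uses $q^{r+dm}\equiv 1$''. For $k\ge d_0$ the factor $(q^d;q^d)_k$ in the denominator of the sixth ratio is itself divisible by $\Phi_{d_0}(q)$ (since $\gcd(d,d_0)=1$ forces $d_0\mid j$ for some $j\le k$), and the numerator $(q^r;q^d)_k$ is divisible by a matching power. So for $J\ge1$ the individual ratios you want to compare are $0/0$ forms modulo $\Phi_{d_0}(q)$: the congruence of Lemma \ref{Th1proofLemma1} is no longer a meaningful factor-by-factor statement there, and its proof, which divides by products of the type $(q^d/\alpha;q^d)_k$ specialised at $\alpha=1$, literally divides by zero modulo $\Phi_{d_0}(q)$. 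A statement about reduced forms of the full summands can be salvaged, but that requires tracking multiplicities of $\Phi_{d_0}$ through every cancellation, which your outline does not do.

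The paper sidesteps this entirely by a different device: it evaluates the terms $c_q(k)$ at a primitive $s$-th root of unity $\zeta$ (with $s\mid n$, $s>1$), where the $0/0$ forms become honest limits, and uses the multiplicative identity $\lim_{q\to\zeta}c_q(ls+k)/c_q(ls)=c_\zeta(k)/[r]$ to reduce every block of length $s$ to the base block $\{0,\dots,s-1\}$, on which only the already-established case $m_1\le s-1$ of the pairing (plus the vanishing of the terms with $m_1<k\le s-1$) is needed. Vanishing of both sums at every such $\zeta$ then gives divisibility by $\Phi_s(q)$ and hence by $[n]=\prod_{s\mid n,\,s>1}\Phi_s(q)$. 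Your block decomposition and zero-counts describe the same combinatorial picture as what the paper's limit argument produces, but to close the argument you need either this root-of-unity formulation or an explicit multiplicity bookkeeping replacing the naive transport of Lemma \ref{Th1proofLemma1}.
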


\begin{proof} It is easy to see that Lemma \ref{Th1proofLemma2} is true  for $n=1$ or $r=0$. We now suppose that $n>1$ and $r\neq 0$. By Lemma \ref{Th1proofLemma1}, for $0\leq k\leq m,$ the $k$-th and $(m-k)$-th terms on the left-hand side of \eqref{super1} cancel each other modulo $\Phi_n(q)$, i.e.,
\begin{align*}
&[2d(m-k)+r]\frac{(aq^r,q^r/a,bq^r,q^r/b,q^r/c,q^r;q^d)_{m-k}}{(aq^d,q^d/a,bq^d,q^d/b,cq^d,q^d;q^d)_{m-k}}(cq^{2d-3r})^{m-k}\notag\\
&\quad\equiv-[2dk+r]\frac{(aq^r,q^r/a,bq^r,q^r/b,q^r/c,q^r;q^d)_k}{(aq^d,q^d/a,bq^d,q^d/b,cq^d,q^d;q^d)_k}(cq^{2d-3r})^k\pmod{\Phi_n(q)}.
\end{align*}
This proves that the $q$-congruence \eqref{super3} is true modulo $\Phi_n(q).$

Furthermore, the $q$-factorial $(q^r;q^d)_k$ has a factor of the form $1-q^{\alpha n}$ for $m<k\leq n-1,$
and so it is congruent to $0$ modulo $\Phi_{n}(q)$ as $dm\equiv-r \pmod{n}$. Note that $(q^d;q^d)_k$ and $\Phi_{n}(q)$ are coprime for $m<k\leq n-1$. Thus, the $k$-th summand in \eqref{super4} with $k$ satisfying $m<k\leq n-1$ is congruent to $0$ modulo $\Phi_{n}(q)$.
This together with \eqref{super3} modulo $\Phi_{n}(q)$ confirms the $q$-congruence \eqref{super4} modulo $\Phi_{n}(q)$.

We are now ready to prove \eqref{super3} and \eqref{super4} modulo $[n]$.
Let $\zeta\neq 1$ be a primitive root of unity of degree $s$ with $s|n$ and $s>1$. Let $c_q(k)$ be the $k$-th term on the left-hand side of \eqref{super3}, i.e.,
$$
c_q(k)=[2dk+r]\frac{(aq^r,q^r/a,bq^r,q^r/b,q^r/c,q^r;q^d)_k}{(aq^d,q^d/a,bq^d,q^d/b,cq^d,q^d;q^d)_k}(cq^{2d-3r})^k.
$$
The $q$-congruences \eqref{super3} and \eqref{super4} modulo $\Phi_{n}(q)$ with $n\mapsto s$ indicate that
$$\sum_{k=0}^{m_1}c_\zeta(k)=\sum_{k=0}^{s-1}c_\zeta(k)=0,$$ where $dm_1\equiv -r\pmod{s}$ and $0\leq m_1 \leq s-1.$
Observe that
$$\lim_{q\rightarrow \zeta}\frac{c_q(ls+k)}{c_q(ls)}=\frac{c_\zeta(k)}{[r]}.$$
It follows that
$$\sum_{k=0}^{n-1}c_\zeta(k)=\sum_{l=0}^{n/s-1}\sum_{k=0}^{s-1}c_\zeta(ls+k)=\frac{1}{[r]}\sum_{l=0}^{n/s-1}c_\zeta(ls)\sum_{k=0}^{s-1}c_\zeta(k)=0,$$
and
$$\sum_{k=0}^{m}c_\zeta(k)=\frac{1}{[r]}\sum_{l=0}^{(m-m_1)/s-1}c_\zeta(ls)\sum_{k=0}^{s-1}c(k)+\frac{c_\zeta(m-m_1)}{[r]}\sum_{k=0}^{m_1}c_\zeta(k)=0,$$
which imply that both $\sum_{k=0}^{n-1}c_q(k)$ and $\sum_{k=0}^{m}c_q(k)$ are congruent to $0$ modulo $\Phi_s(q)$.
The proof then follows the fact that $\prod_{s\mid n,s>1}\Phi_{s}(q)=[n].$
\end{proof}

\begin{theorem}\label{Th1}
Let $a,b $  be indeterminates and $r=\pm 1$. Let $n$ and $d$ be integers satisfying $d\geq 3$ and $n>1,$ such that $\gcd(n,d)=1$, and $n\equiv -r\pmod{d}$.  Then
\begin{align}
&\sum_{k=0}^{M}[2dk+r]\frac{(aq^r,q^r/a,bq^r,q^r/b,q^r,q^r;q^d)_k}{(aq^d,q^d/a,bq^d,q^d/b,q^d,q^d;q^d)_k}q^{(2d-3r)k} \equiv 0\pmod{[n]\Phi_{n}(q)},\label{super1}
\end{align}
where $ M=(dn-n-r)/d$ or $n-1.$
\end{theorem}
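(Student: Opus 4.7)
The plan is to split $[n]\Phi_n(q) = \Phi_n(q)^2 \cdot \prod_{s\mid n,\,1<s<n}\Phi_s(q)$ and establish divisibility by $[n]$ and by $\Phi_n(q)^2$ separately; since distinct cyclotomic polynomials are pairwise coprime, the two conclusions combine to give divisibility by $[n]\Phi_n(q)$.

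The $[n]$-part is immediate from Lemma~\ref{Th1proofLemma2}: setting $c = 1$ in \eqref{super3} (resp.\ \eqref{super4}) specializes the summand there to the one in \eqref{super1}. The hypothesis $n \equiv -r \pmod d$ makes $M = (dn-n-r)/d$ a nonnegative integer with $dM = (d-1)n - r \equiv -r \pmod n$, so Lemma~\ref{Th1proofLemma2} applies with $m = M$; the case $M = n-1$ uses \eqref{super4}.

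For the $\Phi_n(q)^2$-part I would apply Watson's ${}_8\phi_7$ transformation~\eqref{Wasttran}. The three-parameter sum of Lemma~\ref{Th1proofLemma2} can be written as $[r]$ times an ${}_8\phi_7$ by taking $A = q^r$, $B = aq^r$, $C = q^r/a$, $D = bq^r$, $E = q^r/b$, $F = q^r/c$ with base $Q = q^d$---so that $A^2Q^2/(BCDEF) = cq^{2d-3r}$. Applying \eqref{Wasttran} converts this expression into
\[
[r]\cdot \frac{(q^{d+r},q^{d-r},cq^{d-r}/b,bcq^{d-r};q^d)_\infty}{(q^d/b,bq^d,cq^d,cq^{d-2r};q^d)_\infty}\cdot {}_4\phi_3\!\left[\begin{matrix}q^{d-r},bq^r,q^r/b,q^r/c\\ q^d/a,aq^d,q^{2r}/c\end{matrix};q^d,q^d\right],
\]
an identity valid whenever the ${}_4\phi_3$ terminates (for instance when $c = q^{r+dN}$ for a nonnegative integer $N$) and that extends to general $c$ by analytic continuation in $c$. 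Specializing $c = 1$ matches the sum in Theorem~\ref{Th1}, and the task reduces to showing that the right-hand side exhibits two factors of $\Phi_n(q)$.

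The main obstacle is this last step. My plan is to count the cyclotomic content of the prefactor factor by factor: using $\gcd(n,d)=1$ together with $n \equiv -r \pmod d$, one locates the unique index in each of $(q^{d+r};q^d)_\infty$, $(q^{d-r};q^d)_\infty$, $(q^d;q^d)_\infty$, $(q^{d-2r};q^d)_\infty$ that contributes $\Phi_n(q)$, and checks that the numerator--denominator balance leaves one surplus factor of $\Phi_n(q)$. The second factor of $\Phi_n(q)$ should emerge from the terminating ${}_4\phi_3$ evaluated in closed form---plausibly by reducing it to a balanced terminating ${}_3\phi_2$ via a parameter cancellation and then applying $q$-Pfaff--Saalsch\"utz summation. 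Combining these two explicit factors of $\Phi_n(q)$ with the divisibility by $[n]$ from the first part then yields the divisibility by $[n]\Phi_n(q)$ claimed in~\eqref{super1}.
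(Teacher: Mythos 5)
Your parameter matching in Watson's transformation is exactly right (it reproduces the specialization the paper uses), and the $[n]$ part via Lemma~\ref{Th1proofLemma2} with $c=1$ is fine. The fatal step is the passage from the terminating cases $c=q^{r+dN}$ to general $c$ ``by analytic continuation'', followed by setting $c=1$. First, formula \eqref{Wasttran} is only valid when the ${}_4\phi_3$ terminates; its non-terminating analogue (Bailey's extension, see \cite[Appendix III]{GR}) carries a second ${}_4\phi_3$ term whose prefactor contains $(q^r/c;q^d)_\infty$. That extra term vanishes at every point $c=q^{r+dN}$ but not at $c=1$, where it is a nonzero multiple of $(q^r;q^d)_\infty$; since it fails to be analytic at the accumulation point $c=0$, the identity theorem does not propagate your one-term identity to $c=1$, and the identity you write down is simply false there. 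Second, even granting such an identity, at $c=1$ both sides are genuinely infinite series, whereas \eqref{super1} concerns the partial sums $\sum_{k=0}^{M}$ with $M=(dn-n-r)/d$ or $n-1$; nothing in your argument connects the infinite sum at $c=1$ to these truncations. Third, your plan to extract the second factor of $\Phi_n(q)$ from ``the terminating ${}_4\phi_3$ evaluated in closed form'' cannot be executed at $c=1$, because there the ${}_4\phi_3$ does not terminate.

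The paper's route avoids all three problems by never leaving the terminating world: it sets $c=q^{dn-n}$, so that the factor $(q^{r-dn+n};q^d)_k$ truncates both sides exactly at $k=M=(dn-n-r)/d$, and reads the resulting finite identity as a congruence, modulo $c-q^{dn-n}$, between the truncated sums viewed as rational functions of $c$; Lemma~\ref{Th1proofLemma2} (which shows the left-hand side is divisible by $[n]$, hence by $\Phi_n(q)$) upgrades this to a congruence modulo $(c-q^{dn-n})\Phi_n(q)$. Substituting $c=1$ then only requires checking that the denominators arising at $c=1$ are coprime to $\Phi_n(q)$ --- this is the $f_k$ computation --- and the two factors of $\Phi_n(q)$ come for free from the finite prefactor $[dn-n]\,(q^r,q^{d-r};q^d)_{(dn-n-r)/d}/(q^d/b,bq^d;q^d)_{(dn-n-r)/d}$, namely from $[dn-n]$ and from the factor $1-q^{n}$ inside $(q^{d-r};q^d)_{(dn-n-r)/d}$; no closed-form evaluation of the ${}_4\phi_3$ is needed. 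To salvage your write-up, replace the analytic continuation in $c$ by this polynomial-congruence-in-$c$ (``creative microscoping'') argument.
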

\begin{proof} Since $n>1$ and $r=\pm 1,$ we see that
$$
(dn-n-r)/d\leq (dn-n+1)/d\leq n-1.
$$
Letting $q\rightarrow q^d$ and taking $a=q^r, b=aq^r, c=q^r/a, d=bq^r, e=q^r/b, f=q^{r-dn+n}$ in \eqref{Wasttran}, we obtain
\begin{align*}
&\sum_{k=0}^{M}[2dk+r]\frac{(aq^r,q^r/a,bq^r,q^r/b,q^{r-dn+n},q^r;q^d)_k}{(aq^d,q^d/a,bq^d,q^d/b,q^{d+dn-n},q^d;q^d)_k}q^{(2d-3r+dn-n)k} \\
&\quad=[dn-n]\frac{(q^r,q^{d-r};q^d)_{(dn-n-r)/d}}{(q^d/b,bq^{d};q^d)_{(dn-n-r)/d}}\sum_{k=0}^{M}\frac{(q^{d-r},bq^{r},q^{r}/b,q^{r-dn+n};q^d)_kq^{dk}}{(q^d/a,aq^d,q^{2r-dn+n},q^d;q^d)_k}.
\end{align*}
Namely,
\begin{align}\label{section21}
&\sum_{k=0}^{M}[2dk+r]\frac{(aq^r,q^r/a,bq^r,q^r/b,q^r/c,q^r;q^d)_k}{(aq^d,q^d/a,bq^d,q^d/b,cq^{d},q^d;q^d)_k}(cq^{2d-3r})^k \notag\\
&\quad\equiv [dn-n]\frac{(q^r,q^{d-r};q^d)_{(dn-n-r)/d}}{(q^d/b,bq^{d};q^d)_{(dn-n-r)/d}} \notag\\ &\quad\times\sum_{k=0}^{M}\frac{(q^{d-r},bq^{r},q^{r}/b,q^{r}/c;q^d)_kq^{dk}}{(q^d/a,aq^d,q^{2r}/c,q^d;q^d)_k} \quad\pmod{(c-q^{dn-n})}.
\end{align}

On the other hand, by Lemma \ref{Th1proofLemma2}, the left-hand side of \eqref{section21} is congruent to 0 modulo $[n]$.
Moreover, $[dn-n]$ is also congruent to $0$ modulo $\Phi_n(q)$, and therefore \eqref{section21} also holds modulo $\Phi_n(q)$. Since the polynomials $\Phi_n(q)$ and $c-q^{dn-n}$ are coprime, we have
\begin{align}\label{super5}
&\sum_{k=0}^{M}[2dk+r]\frac{(aq^r,q^r/a,bq^r,q^r/b,q^r/c,q^r;q^d)_k}{(aq^d,q^d/a,bq^d,q^d/b,cq^d,q^d;q^d)_k}(cq^{2d-3r})^k \notag\\
&\quad\equiv[dn-n]\frac{(q^r,q^{d-r};q^d)_{(dn-n-r)/d}}{(q^d/b,bq^{d};q^d)_{(dn-n-r)/d}}\notag\\
&\quad\times\sum_{k=0}^{M}\frac{(q^{d-r},bq^{r},q^{r}/b,q^{r}/c;q^d)_kq^{dk}}{(q^d/a,aq^d,q^{2r}/c,q^d;q^d)_k}\quad\pmod{(c-q^{dn-n})\Phi_n(q)},
\end{align}

For any integer $x$, let $f_k(x)$ be the least non-negative integer $k$ such that $(q^x;q^d)_k\equiv 0\pmod{\Phi_{n}(q)}$. Since $n\equiv -r\pmod{d},$ we have $f_k(d-r)=(n+r)/d, f_k(r)=(d(n+1)-(n+r))/d, f_k(d)=n, f_k(2r)=(d(n+1)-2(n+r))/d$.
It is easy to see that
$$ f_k(d) \geq f_k(r)=(d(n+1)-(n+r))/d>(dn-n-r)/d\geq f_k(2r)\geq f_k(d-r) $$
for $r= \pm 1$ and $d\geq 3$.
Hence, when $c\rightarrow 1$, the denominator of the reduced form of the $k$-th summand
$$\frac{(q^{d-r},bq^r,q^r/b,q^r;q^d)_k}{(q^d/a,aq^d,q^{2r},q^d;q^d)_k}q^{dk}$$
in the ${}_4\phi_3$ summation is always relatively prime to $\Phi_{n}(q)$ for $k\geq 0$.
This proves that the congruences \eqref{super1} hold modulo $\Phi_{n}(q)^2$ by noticing that $(q^{d-r};q^d)_{(dn-n-r)/d}$ contains the factor $1-q^n$ and $q^n\equiv 1 \pmod{\Phi_{n}(q)}$. Using Lemma \ref{Th1proofLemma2} and ${\rm lcm}(\Phi_{n}(q)^2,[n])=[n]\Phi_n(q)$, the proof of the theorem is complete.
\end{proof}

\begin{proof}[Proof of Theorems \ref{Conject1} and \ref{Conject2} ]
Taking $c=1$ in \eqref{super3} and \eqref{super4} and applying Lemma \ref {Th1proofLemma2}, we see that the $q$-congruences \eqref{more5} and \eqref{more6} hold modulo $[n]$.
Further, the proof of the modulus $[n]\Phi_n(q)$ case of \eqref{more5} and \eqref{more6} then follows from  Theorem \ref{Th1}.
\end{proof}

\vskip 5mm \noindent{\bf Acknowledgment.}
We thank Professor Victor J. W. Guo  for his  helpful comments on this paper.


\begin{thebibliography}{99}
\bibitem{AO}  S. Ahlgren and K. Ono, {\it Gaussian hypergeometric series evaluation and Ap\'{e}ry number congruences},
J. Reine Angew. Math. {\bf 518} (2000), 187--212.

\bibitem{GR} G. Gasper, M. Rahman, {\it Basic hypergeometric series}, Second Edition, Encyclopedia of Mathematics and Its Applications, Vol. 96, Cambridge University Press, 2004.

\bibitem{G2} V.J.W. Guo, {\it $q$-Supercongruences modulo the fourth power of a cyclotomic polynomial via creative microscoping},
Adv. Appl. Math. {\bf 120} (2020)  Art. 102078.

\bibitem{G3} V.J.W. Guo, {\it A $q$-analogue of the (A.2) supercongruence of Van Hamme for primes $p\equiv 1\pmod 4$},
          Rev. R. Acad. Cienc. Exactas F\'is. Nat., Ser. A Mat. RACSAM {\bf 114} (2020), Art.~123.

\bibitem{Guonew}V.J.W. Guo, {\it A further $q$-analogue of Van Hamme's (H.2) supercongruence for primes $p\equiv 3\pmod{4}$}, Int. J. Number Theory, to appear.
    
\bibitem{GS20} V.J.W. Guo and M.J. Schlosser, {\it A new family of $q$-supercongruences modulo the fourth power of a cyclotomic polynomial}, Results Math. {\bf 75} (2020), Art. 155.

\bibitem{GS}V.J.W. Guo and M.J. Schlosser, {\it Some $q$-supercongruences from transformation formulas for basic
hypergeometric series}, Constr. Approx., to appear.



\bibitem{GS21} V.J.W. Guo and M.J. Schlosser, {\it A family of q-hypergeometric congruences modulo the fourth power of a cyclotomic polynomial}, Israel J. Math., to appear.

\bibitem{GZ1} V.J.W. Guo and W. Zudilin, {\it A $q$-microscope for supercongruences}, Adv. Math. {\bf 346} (2019),
329--358.

\bibitem {Ha96} L. Van Hamme, {\it Some conjectures concerning partial sums of generalized hypergeometric series}, in: $p$-Adic Functional
Analysis, Nijmegen, 1996, in: Lecture Notes in Pure and Appl. Math., vol. 192, Dekker, New York, 1997, pp. 223--236.


\bibitem{Ki} T. Kilbourn, {\it An extension of the Ap\'{e}ry number supercongruence}, Acta Arith. {\bf  123} (2006), 335--348.

\bibitem{LW}L. Li and S.-D. Wang, {\it Proof of a $q$-supercongruence conjectured by Guo and Schlosser},
Rev. R. Acad. Cienc. Exactas Fs. Nat., Ser. A Mat. RACSAM, {\bf 114} (2020), Art. 190.

\bibitem{LP}J.-C. Liu and F. Petrov, {\it Congruences on sums of $q$-binomial coefficients},
Adv. Appl. Math. {\bf 116} (2020), Art.~102003.

\bibitem {Lo} L. Long, {\it Hypergeometric evaluation identities and supercongruences}, Pacific J. Math. {\bf 249} (2011), 405--418.

\bibitem {LR} L. Long and R. Ramakrishna, {\it Some supercongruences occurring in truncated hypergeometric series},
Adv. Math. {\bf 290} (2016), 773--808.

\bibitem{NP}H.-X. Ni and H. Pan, {\it Some symmetric $q$-congruences modulo the square of a cyclotomic polynomial},
J. Math. Anal. Appl. 481 (2020), Art. 123372.

\bibitem{NP2} H.-X. Ni and H. Pan, {\it Divisibility of some binomial sums}, Acta Arith. {\bf 194} (2020), 367--381.

\bibitem  {OZ16} R. Osburn and W. Zudilin, {\it On the (K.2) supercongruence of Van Hamme}, J. Math. Anal. Appl., {\bf 433} (2016), 706-711.

\bibitem {Sw15} H. Swisher, {\it On the supercongruence conjectures of Van Hamme}, Res. Math. Sci., {\bf 2} (2015), 1--21.

\bibitem{WY1}X. Wang and M. Yu, {\it Some new $q$-congruences on double sums},
 Rev. R. Acad. Cienc. Exactas F\'is. Nat., Ser. A Mat. RACSAM, to appear.

\bibitem{Zudilin2}W. Zudilin, {\it Congruences for $q$-binomial coefficients}, Ann. Combin. {\bf 23} (2019), 1123--1135.

\end{thebibliography}
\end{document}